\documentclass{amsart}

\usepackage{graphicx}
\usepackage{amssymb}
\usepackage{amsmath}
\usepackage{amsthm}
\usepackage{cite}
\usepackage{cases}
\usepackage[font={it}, margin=1cm]{caption}

\newtheorem{theorem}{Theorem}[section]
\newtheorem{lemma}[theorem]{Lemma} 
\newtheorem{proposition}[theorem]{Proposition} 
\newtheorem{corollary}[theorem]{Corollary} 
\newtheorem{thmletter}{Theorem}
\newtheorem{corolletter}[thmletter]{Corollary}


\newtheoremstyle{example}{3pt}{3pt}{}{10pt}{\itshape}{:}{.5em}{}
\theoremstyle{example}
\newtheorem{example}[theorem]{Example}

\newcommand{\p}[1]{\noindent {\newline\bf #1.}}
\newcommand{\pf}[1]{\indent {\newline\it #1:}}
\newcommand{\out}{\operatorname{Out}}
\newcommand{\aut}{\operatorname{Aut}}
\newcommand{\inn}{\operatorname{Inn}}
\newcommand{\GL}{\operatorname{GL}}
\newcommand{\BS}{\operatorname{BS}}
\newcommand{\<}{\langle}
\renewcommand{\>}{\rangle}
\newcommand{\stab}{\operatorname{Stab}}


\usepackage{verbatim}


\begin{document}

\title[$\out(G)$ for $2$-generator, $1$-relator groups with torsion]{The outer automorphism groups of two-generator, one-relator groups with torsion}
\author[A.D.Logan]{Alan D. Logan}

\address{University of Glasgow, School of Mathematics and Statistics, University Gardens, G12 8QW, Scotland}
\email{Alan.Logan@glasgow.ac.uk}
\thanks{The author would like to thank his PhD supervisor, Stephen J. Pride, and Tara Brendle for many helpful discussions about this paper.}

\subjclass[2010]{Primary 20F28, 20E99, secondary 20F65, 20F67}

\keywords{One relator groups, Outer automorphism groups, Nielsen equivalence}

\date{\today}

\maketitle

\begin{abstract}
The main result of this paper is a complete classification of the outer automorphism groups of two-generator, one-relator groups with torsion.
To this classification we apply recent algorithmic results of Dahmani--Guirardel, which yields an algorithm to compute the isomorphism class of the outer automorphism group of a given two-generator, one-relator group with torsion.
\end{abstract}

\section{Introduction}

Let $R$ denote a non-trivial freely and cyclically reduced word in $F(a, b)$ which is not a proper power of any other word (we maintain this convention throughout the paper). The group $G=\< a, b; R^n\>$ has torsion if and only if $n>1$~\cite[Theorem~4.12]{mks}, and we call such a group $G$ a \emph{two-generator, one-relator group with torsion}.

In this paper we classify the outer automorphism groups of two-generator, one-relator groups with torsion. A \emph{primitive element} of $F(a, b)$ is a word $R$ which is contained in a basis of $F(a, b)$, and this paper studies the case when $R$ is not primitive. This focus is because $R$ is primitive if and only if $G=\<a, b;R^n\>\cong \mathbb{Z}\ast C_n$ \cite{Pride1977}, and if and only if $G=\<a, b;R^n\>$ is not one-ended \cite{fischer1972one}.

\p{One-relator groups}
One-relator groups are classically studied (see, for example, the classic texts in combinatorial group theory \cite{mks} \cite{L-S} or the early work of Magnus \cite{magnus1930freiheitssatz} \cite{magnus1932wordproblem}), and these groups continue to have applications to this day, for example in $3$-dimensional topology and knot theory~\cite{2014Ichihara}~\cite{Friedl2015Two}. One-relator groups with torsion are hyperbolic and as such they serve as important test cases for this larger class of groups. For example, the isomorphism problem for two-generator, one-relator groups with torsion was shown to be soluble \cite{Pride1977} long before Dahmani--Guirardel's recent resolution of the isomorphism problem for all hyperbolic groups \cite{dahmani2011isomorphism}. As another example, Wise recently resolved the classical conjecture of G. Baumslag that all one-relator groups with torsion are residually finite \cite{wise2012riches}, while it is still an open question as to whether all hyperbolic groups are residually finite.

Two-generator, one-relator groups with torsion are particularly important within the class of one-relator groups with torsion for two reasons. Firstly, every one-relator group with torsion can be embedded into a two-generator, one-relator group with tosion. Secondly, every two-generator subgroup of a one-relator group with torsion is either a free group or a two-generator, one-relator group with torsion \cite{pride1977twogensubgps}.

\p{Main theorem}
Writing $D_{n}$ for the dihedral group of order $2n$, the main result of this paper is as follows. Recall that $R$ denotes a non-trivial word in $F(a, b)$ which is not a proper power.
\begin{thmletter}
\label{thm:onerelmaintheorem}
Let $G$ be a two-generator, one-relator group with torsion, so $G\cong\langle a, b; R^n\rangle$ with $n>1$.
\begin{enumerate}
\item If $G\cong \langle a, b; [a, b]^n\rangle$ then $\out(G)\cong \GL_2(\mathbb{Z})$.
\item Suppose that $G$ is one-ended and $G\not\cong\langle a, b; [a, b]^n\rangle$.
\begin{enumerate}
\item\label{mainthm:ref1} If $\out(G)$ is infinite then it is isomorphic to $D_{\infty}\times C_2$, $D_{\infty}$, $\mathbb{Z}\times C_2$ or $\mathbb{Z}$.
\item\label{mainthm:ref2} If $\out(G)$ is finite then it embeds into either $D_6$ or $D_4$.
\end{enumerate}
\item If $G$ is infinitely ended, so $G\cong \mathbb{Z}\ast C_n$, then $\out(G)~\cong~D_{n}\rtimes\aut(C_n)$.
\end{enumerate}
Moreover, all the possibilities from (\ref{mainthm:ref1}) and from (\ref{mainthm:ref2}) occur.
\end{thmletter}
Explicit examples of each of the groups occurring in (\ref{mainthm:ref1}) are given in Lemma~\ref{lem:realiseinfinite}, and examples of each of the groups occurring in (\ref{mainthm:ref2}) are given in Lemmas~\ref{lem:realisefinitebatch1}~and~\ref{lem:examplesFiniteOut}. If $G$ is one-ended then our methods show that the isomorphism class of $\out(G)$ depends only on the root $R$ of the relator $R^n$.

A general theory exists for the outer automorphism groups of hyperbolic groups, based on JSJ-decompositions. However, this theory is limited as it describes only a finite-index subgroup of the outer automorphism group \cite{levitt2005automorphisms}. In Theorem~\ref{thm:JSJdecomp} we prove the relevant coarse description (our proof uses fixed points of automorphisms in free groups rather than JSJ-decompositions). The proof of Theorem~\ref{thm:onerelmaintheorem} improves upon the coarse description of Theorem~\ref{thm:JSJdecomp} by combining it with faithful linear representations over $\mathbb{Z}$ of the outer automorphism groups, which we prove exist in Theorem~\ref{injecting}.

In Section~\ref{sec:applications} we give two applications of Theorem~\ref{thm:onerelmaintheorem}. Our first application combines Theorem~\ref{thm:onerelmaintheorem} with recent work of Dahmani--Guirardel on algorithms in hyperbolic groups \cite{dahmani2011isomorphism} to prove the following corollary of Theorem~\ref{thm:onerelmaintheorem}.
\begin{corolletter}
\label{corol:MainCorollary}
There exists an algorithm which takes as input a presentation $\langle a, b; R^n\rangle$, $n>1$, defining a group $G$ and gives as output the isomorphism class of $\out(G)$.
\end{corolletter}
The algorithm of Corollary~\ref{corol:MainCorollary} does not simply find the isomorphism class of $\out(G)$ but also finds automorphisms of $G=\langle a, b; R^n\rangle$, given in terms of their action on the generators $a$ and $b$, whose cosets generate $\out(G)$.

Our second application in Section~\ref{sec:applications} is an explanation of how to use Theorem~\ref{thm:onerelmaintheorem} to write down a presentation for the full automorphism group of a two-generator, one-relator group with torsion.

\p{Previous results} Much of the previous work on the outer automorphism groups of one-relator groups with torsion is based around residual finiteness. For example, Kim--Tang proved certain specific two-generator, one-relator groups with torsion have residually finite outer automorphism groups \cite{paper2, paper3}, while it is a recent result of Carette that an (arbitrarily-generated) one-relator group with torsion has residually finite outer automorphism group \cite{carette2013virtually}.%
\footnote{Note that Carette uses the deep result of Wise that one-relator groups with torsion are residually finite \cite{wise2012riches}.}
Note that, restricting to the two-generator case, Theorem~\ref{thm:onerelmaintheorem} is much stronger than these previous results.

It is worth mentioning that Theorem~\ref{thm:onerelmaintheorem} shows that the outer automorphism groups of two-generator, one-relator groups with torsion are very similar to the outer automorphism groups of one-relator groups with non-trivial center \cite{GHMR} and to the outer automorphism groups of Baumslag--Solitar groups $\BS(m, n)=\langle b, s; s^{-1}b^ms=b^n\rangle$ with $|m|\geq|n|$ and $n$ is not a proper divisor of $m$ \cite{clay2006deformation}. In each of these two cases, the base groups are non-hyperbolic two-generator, one-relator groups \emph{without} torsion, but their outer automorphism groups are found in the list given in Theorem~\ref{thm:onerelmaintheorem}.

\p{Overview of the paper}
Our proof of the main result, Theorem~\ref{thm:onerelmaintheorem}, is built around two technical theorems: Theorem~\ref{thm:JSJdecomp}, which states that if $G$ is one-ended and $G\not\cong\<a, b; [a, b]^n\>$ then $\out(G)$ is virtually-cyclic, and Theorem~\ref{injecting}, which embeds $\out(G)$ into $\GL_2(\mathbb{Z})$ when $G$ is one-ended. Another key result is Lemma~\ref{lem:OutInfSplit}, which is, essentially, a rewriting result in the spirit of Moldovanskii rewriting. Lemma~\ref{lem:OutInfSplit} is notable because of its proof, which combines technical results and methods of Kapovich--Weidmann \cite{kapovich1999structure} \cite{kapovich1999two} with results of the author on the structure of the JSJ-decompositions of one-relator groups with torsion \cite{logan2014JSJ}.

In Section~\ref{sec:onerelbground} we state a key result due to Pride on Nielsen equivalence.
In Section~\ref{sec:JSJ} we apply results on fixed points of automorpisms in free groups to prove our first main technical theorem, Theorem~\ref{thm:JSJdecomp}. In Section \ref{sec:embedding} we prove our second main technical theorem, Theorem~\ref{injecting}, and we prove that if $G\cong \langle a, b; [a, b]^n\rangle$ then $\out(G)\cong \GL_2(\mathbb{Z})$.
In Section~\ref{sec:outinfinite} we determine the possibilities for $\out(G)$ if $\out(G)$ is infinite, $G$ is one-ended and $G\not\cong \langle a, b; [a, b]^n\rangle$.
In Section~\ref{sec:outfinite} we determine the possibilities for $\out(G)$ if $\out(G)$ is finite and $G$ is one-ended. 
In Section~\ref{sec:proofofmaintheorem} we assemble the proof of Theorem~\ref{thm:onerelmaintheorem} from the previous sections. In Section~\ref{sec:applications} we prove Corollary~\ref{corol:MainCorollary} and we explain how to obtain a presentation for $\aut(G)$.

\section{Nielsen equivalence of generating pairs}
\label{sec:onerelbground}

In this section we state Proposition~\ref{Pride}, which classifies the Nielsen equivalence classes of generating pairs in a two-generator, one-relator group with torsion.
We begin by motivating our use of Proposition~\ref{Pride}.

\p{The inducing homomorphism}
Proposition~\ref{Pride} is due to Pride, who used it to solve the isomorphism problem for two-generator, one-relator groups with torsion~\cite{Pride1977}. Proposition~\ref{Pride} implies that the automorphisms of a one-ended group $G=\<a, b;R^n\>$ are induced by automorphisms of the ambient free group $F(a, b)$, so there exists a homomorphism, the \emph{inducing homomorphism}, from a subgroup $H$ of $\out(F(a, b))$ to $\out(G)$, $\theta: H\twoheadrightarrow\out(G)$. The two main technical results of this paper, Theorems~\ref{thm:JSJdecomp}~and~\ref{injecting}, both apply this inducing homomorphism in fundamental ways.

\p{Nielsen equivalence}
A \emph{Nielsen transformation $\phi_t$ of the pair $(a, b)$}, or \emph{of $F(a, b)$}, is an automorphism of the free group $F(a, b)$. Two generating pairs $(y_1, y_2)$ and $(z_1, z_2)$ of a group $G=\< x_1, x_2; \mathbf{r}\>$ are \emph{Nielsen equivalent} if there exists some Nielsen transformation $\phi_t$ of the pair $(x_1, x_2)$ such that if $\phi_t(x_1)=w_1(x_1, x_2)$ and $\phi_t(x_2)=w_2(x_1, x_2)$ then the word $w_i(y_1, y_2)$ is equal in $G$ to the word $z_i$, for $i=1, 2$.
\[
(w_1(y_1, y_2), w_2(y_1, y_2)) =_G (z_1, z_2)
\]
The equivalence classes of this equivalence relation are called \emph{Nielsen equivalence classes (of generating pairs)}.

\begin{proposition}[Pride, 1977]\label{Pride}
Let $G = \< a, b; R^n\>$ with $n>1$ and $R$ not a proper power.
Suppose that $R$ is not a primitive element of $F(a, b)$, or $R$ is a primitive element and $n=2$. Then $G$ has only one Nielsen equivalence class of generating pairs. Suppose, on the other hand, that $R$ is primitive and $n>2$. Then $G$ has $\frac{1}{2}\varphi(n)$ Nielsen equivalence classes (where $\varphi$ is the Euler totient function).
\end{proposition}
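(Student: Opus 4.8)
The plan is to translate the statement into a question about surjections from a rank-two free group. Writing $F = F(x,y)$, a generating pair $(u,v)$ of $G$ is the same datum as a surjection $\theta\colon F \to G$ with $x\mapsto u$, $y \mapsto v$, and two pairs are Nielsen equivalent exactly when the corresponding surjections differ by precomposition with an element of $\operatorname{Aut}(F)$. Thus the number of Nielsen equivalence classes equals the number of $\operatorname{Aut}(F)$-orbits on the set of surjections $F \twoheadrightarrow G$. The first reduction I would carry out is to show that the kernel of any such $\theta$ is the normal closure $\langle\langle S^n\rangle\rangle$ of a single element $S\in F$ that is not a proper power, with the \emph{same} exponent $n$: the relation module of a one-relator group with torsion is cyclic, so every two-generator presentation of $G$ is again a one-relator presentation, while the maximal order of a torsion element is an isomorphism invariant, which pins the exponent to $n$ and forces $S$ to be non-proper-power.

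Second, I would treat the primitive case directly. If $R$ is primitive then, after applying the automorphism of $F(a,b)$ that carries $R$ to $a$, we have $G \cong C_n * \langle b\rangle$ with $C_n = \langle a\rangle$. All torsion of a free product is conjugate into a factor (Kurosh), so Nielsen reduction of a generating pair against this free-product structure shows every pair is Nielsen equivalent to one of the form $(a^k, b)$ with $\gcd(k,n)=1$. The conjugacy-and-inversion class of the torsion generator $a^k$ is a Nielsen invariant, so the classes are indexed by the generators of $C_n$ modulo inversion. For $n>2$ this gives $\tfrac{1}{2}\varphi(n)$ classes, while for $n=2$ one has $a = a^{-1}$ and there is a single class.

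Third, for the non-primitive case I would establish rigidity of the relator: every normal subgroup $N \triangleleft F$ with $F/N \cong G$ lies in a single $\operatorname{Aut}(F)$-orbit. Granting this, any surjection $\theta$ is, after precomposition with an automorphism of $F$, a surjection with kernel exactly $\langle\langle R^n\rangle\rangle$, hence of the form $\sigma\circ\theta_0$ for the standard map $\theta_0\colon x\mapsto a,\ y\mapsto b$ and some $\sigma \in \operatorname{Aut}(G)$. Two such surjections lie in the same $\operatorname{Aut}(F)$-orbit precisely when $\sigma$ differs by an automorphism of $G$ that is induced by an automorphism of $F$ preserving $\langle\langle R^n\rangle\rangle$. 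By the embedding $\operatorname{Out}(G)\hookrightarrow\operatorname{Out}(F)$ available for non-primitive $R$ (see \cite{pride}), \emph{every} automorphism of $G$ lifts in this way, so all these surjections are equivalent and $G$ has a single Nielsen class.

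The main obstacle is the rigidity statement used in the third step, namely that the defining relator $R^n$ is determined up to an automorphism of $F$. This is where the deep structure of one-relator groups with torsion must enter: Magnus's theorem that two elements of a free group with equal normal closures are conjugate up to inversion, together with the Freiheitssatz and B.\ B.\ Newman's spelling theorem controlling the torsion, should reduce the comparison of two relators $S^n$ and $R^n$ to a Whitehead-type argument on $F$. I also expect the one-relator reduction in the first step to require care, since cyclicity of the relation module does not by itself guarantee that $N$ is normally generated by a single element; establishing this for two-generator one-relator groups with torsion is part of the work.
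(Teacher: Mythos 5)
First, a point of reference: the paper does not prove Proposition~\ref{Pride} at all --- it is quoted as a known result of \cite{pride}, so there is no internal proof to compare against. Judged on its own, your outline has the right skeleton (Nielsen classes as $\operatorname{Aut}(F)$-orbits of surjections, a split into the primitive and non-primitive cases), but it defers the work to exactly the places where the theorem is hard, and in one place it is circular relative to this paper. The central gap is your ``rigidity'' step: the claim that every normal $N\triangleleft F$ with $F/N\cong G$ lies in a single $\operatorname{Aut}(F)$-orbit, together with the preliminary claim that every such kernel is $\langle\langle S^n\rangle\rangle$ for a single non-proper-power $S$, \emph{is} essentially the content of Pride's theorem. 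Magnus's normal-closure theorem only compares two relators whose normal closures are literally equal; it says nothing about two distinct kernels with abstractly isomorphic quotients, and ``a Whitehead-type argument'' names a hoped-for method rather than supplying one. You acknowledge that cyclicity of the relation module does not yield a one-relator presentation, but that acknowledgement does not close the gap.

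Second, your appeal to the embedding $\operatorname{Out}(G)\hookrightarrow\operatorname{Out}(F(a,b))$ to conclude that every automorphism of $G$ lifts to an automorphism of $F$ preserving the kernel is circular in the context of this paper: Theorem~\ref{injecting} is itself deduced from Proposition~\ref{Pride} (which gives $L_{(a,b)}=\operatorname{Aut}(G)$, i.e., tameness of all automorphisms) together with Lemma~\ref{lemma3}. The assertion ``every automorphism of $G$ is tame'' is precisely the restriction of the proposition to the $T$-system of $(a,b)$, so citing it assumes a key part of what is to be proven. Finally, in the primitive case your proposed invariant does not obviously work: all the surjections $x\mapsto a^k$, $y\mapsto b$ onto $C_n\ast\mathbb{Z}$ have the \emph{same} kernel $\langle\langle x^n\rangle\rangle$, so the kernel cannot separate the $\tfrac{1}{2}\varphi(n)$ classes, and ``the conjugacy-and-inversion class of the torsion generator'' is not visibly preserved by elementary Nielsen moves, which replace $a^k$ by words mixing both generators. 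Turning that phrase into an actual invariant is where the Nielsen cancellation method for free products has to do real work, and it is the substance of the count $\tfrac{1}{2}\varphi(n)$.
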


\section{$\out(G)$ is virtually cyclic}
\label{sec:JSJ}

In this section we prove Theorem~\ref{thm:JSJdecomp}. Our proof applies results of Bogopolski on fixed points of automorphisms of free groups \cite{bogopolski2000classification}. We apply Theorem~\ref{thm:JSJdecomp} in Section~\ref{sec:outinfinite}, where we combine Theorems~\ref{thm:JSJdecomp}~and~\ref{injecting} to completely classify the possibilities for $\out(G)$ when $\out(G)$ is infinite and $G\not\cong\<a, b; [a, b]^n\>$.

We shall write $\gamma_g$ to mean conjugation by $g$, so $\gamma_g: h\mapsto h^g$, while $\epsilon$ denotes $1$ or $-1$, $\epsilon=\pm1$.

\begin{theorem}\label{thm:JSJdecomp}
Let $G=\<a, b; R^n\>$, with $n>1$. Suppose that $G$ is one-ended. Then either $\out(G)$ is virtually-cyclic or $G\cong \< a, b; [a, b]^n\>$.
\end{theorem}

\begin{proof}
Every automorphism $\phi$ of $G=\<a, b; R^n\>$ is induced by a Nielsen transformation $\phi_t$ of the ambient free group $F(a, b)$, by Proposition~\ref{Pride}, and such a map $\phi_t$ sends $R$ to a conjugate of $R$ or of $R^{-1}$ in $F(a, b)$ \cite[Theorem~N5]{mks}.

Suppose $R$ is not a conjugate of $[a, b]$ or $[a, b]^{-1}$ (equivalently, $G\not\cong \< a, b; [a, b]^n\>$ \cite{Pride1977}). Define the subgroup ${\stab_{0}(R)}$ of $\out(F(a, b))$ as follows.
\[
{\stab_0(W)}:=\{\phi_t\in\aut(F(a, b)) : \:\exists g\in G \text{ s.t. }\phi_t(R)=\gamma_g(R^{\epsilon})\}
\]
By the above observation, $\out(G)$ is a homomorphic image of the following group.
\[
H_0(R):={\stab_0(R)}/({\stab_0(R)}\cap\inn(F(a, b)))
\]
We prove that $H_0(R)$ is virtually cyclic. Hence $\out(G)$ is virtually cyclic. Define
\[
\stab(R):=\{\phi\in\aut(F(a, b)) : \:\exists g\in G \text{ s.t. } \phi(R)=\gamma_g(R)\}
\]
and note that the group $H(R):=\stab(R)/(\stab(R)\cap\inn(F(a, b)))$ is virtually cyclic \cite{bogopolski2000classification}. Then ${\stab_0(R)}\cap\inn(F(a, b)={\stab(R)}\cap\inn(F(a, b)$, as $g^{-1}Rg\not\equiv R^{-1}$ unless $R$ is the empty word. Therefore, $H_0(R)/H(R)\cong{\stab_0(R)}/\stab(R)$. Then $[\stab_0(R):\stab(R)]=2$ and so $[H_0(R):H(R)]=2$, and we conclude that $H_0(R)$ is virtually cyclic, as required.
\end{proof}

Theorem~\ref{thm:JSJdecomp} can also be proven using JSJ-decompositions \cite[Section 3.1]{logan2014outer}. From this viewpoint, the omission of $G\cong\<a, b; [a, b]^n\>$ is natural because this corresponds to when $G$ is Fuchsian \cite{fine211classification}, and JSJ-decompositions yield no information about $\out(G)$ when $G$ is Fuchsian \cite{levitt2005automorphisms}.

\section{$\out(G)$ embeds into $\out(F(a, b))$}
\label{sec:embedding}

In this section we prove Theorem~\ref{injecting}, which states that if $G$ is a one-ended two-generator, one-relator group with torsion then $\out(G)$ embeds into $\GL_2(\mathbb{Z})$. We do this by proving that the inducing homomorphism, as defined in Section~\ref{sec:onerelbground}, is always an isomorphism. In Sections~\ref{sec:outinfinite}~and~\ref{sec:outfinite} we apply this embedding, along with Theorem~\ref{thm:JSJdecomp}, to determine the possibilities for $\out(G)$.

We begin with Proposition~\ref{transformationhomomorphism}. This is a variant of a classical result of Nielsen \cite[Corollary~$N4$]{mks}, and allows us to view $\out(F(a, b))$ as a group of matrices. This view is useful in the proof of Theorem~\ref{injecting}. After proving Theorem~\ref{injecting}, Proposition~\ref{transformationhomomorphism} further allows us to view the groups $\out(G)$ as groups of matrices.

\begin{proposition}\label{transformationhomomorphism}(Nielsen, 1924)
Let $\phi_t: a\mapsto A$, $b\mapsto B$ be an arbitrary Nielsen transformation of $F(a, b)$. Then the map,
\begin{align*}
\xi: \operatorname{Aut}(F(a, b))&\rightarrow GL(2, \mathbb{Z})\\
\phi_t&\mapsto \left(\begin{array}{cc}\sigma_a(A) & \sigma_b(A)\\\sigma_a(B) & \sigma_b(B)\end{array}\right)
\end{align*}
is an epimorphism, and $\operatorname{Ker}(\xi)=\operatorname{Inn}(F(a, b))$.
\end{proposition}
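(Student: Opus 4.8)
The plan is to recognise $\xi$ as the homomorphism on $\operatorname{Aut}(F(a,b))$ induced by abelianisation, and to reduce everything except one point to routine verification. The maps $\sigma_a,\sigma_b$ are precisely the coordinates, in the basis $\{\bar a,\bar b\}$, of the abelianisation homomorphism $F(a,b)\to F(a,b)^{\mathrm{ab}}\cong\mathbb{Z}^2$. By functoriality of abelianisation each $\phi\in\operatorname{Aut}(F(a,b))$ descends to an automorphism of $\mathbb{Z}^2$ whose matrix with respect to $\{\bar a,\bar b\}$ is exactly $\xi(\phi)$; hence $\xi(\phi)\in GL(2,\mathbb{Z})$, and since the paper's maps act on the right (so that the rows of $\xi(\phi)$ record the images of $a$ and $b$) one checks directly that $\xi(\phi\psi)=\xi(\phi)\xi(\psi)$, so $\xi$ is a homomorphism. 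Surjectivity follows because $GL(2,\mathbb{Z})$ is generated by $\left(\begin{smallmatrix}1&1\\0&1\end{smallmatrix}\right)$, $\left(\begin{smallmatrix}0&1\\1&0\end{smallmatrix}\right)$ and $\left(\begin{smallmatrix}1&0\\0&-1\end{smallmatrix}\right)$, which are the $\xi$-images of the elementary Nielsen transformations $a\mapsto ab$, $a\leftrightarrow b$ and $b\mapsto b^{-1}$ respectively; and $\operatorname{Inn}(F(a,b))\subseteq\operatorname{Ker}(\xi)$ is immediate, since conjugation induces the identity on $\mathbb{Z}^2$.

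The substance of the proposition is the reverse inclusion $\operatorname{Ker}(\xi)\subseteq\operatorname{Inn}(F(a,b))$, and this is the part special to rank two. My tool would be the commutator $c=[a,b]$. First I would establish the lemma that $c\phi$ is conjugate to $c^{\det\xi(\phi)}$ for every $\phi$: this is checked on the three generating Nielsen transformations above by direct calculation, and then propagated to all of $\operatorname{Aut}(F(a,b))$ using that the family of conjugates of $c^{\pm1}$ is invariant under every automorphism and that $\det$ is multiplicative. Given $\phi\in\operatorname{Ker}(\xi)$ we have $\det\xi(\phi)=1$, so $c\phi=g^{-1}cg$ for some $g\in F(a,b)$; composing $\phi$ with a suitable inner automorphism (which keeps us inside $\operatorname{Ker}(\xi)$) I may assume $c\phi=c$.

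It then remains to show that a basis $(A,B)=(a\phi,b\phi)$ with $\bar A=\bar a$, $\bar B=\bar b$ and $[A,B]=[a,b]$ is a simultaneous conjugate of $(a,b)$; granting this, $\phi$ coincides with the conjugating inner automorphism and we are done. This rigidity statement---equivalently, the injectivity of $\operatorname{Out}(F(a,b))\to GL(2,\mathbb{Z})$---is where I expect the real difficulty to lie, and it is genuinely a rank-two phenomenon, failing for $F_n$ with $n\geq 3$. It reflects the geometry of the once-punctured torus, whose fundamental group is $F(a,b)$ with $[a,b]$ representing the boundary. I would carry it out by a Nielsen-reduction analysis of the constrained pair $(A,B)$, pushing the abelianisation and commutator constraints through the reduction to pin $(A,B)$ down up to simultaneous conjugacy, or else simply invoke the classical computation underlying Corollary~N4 of \cite{mks}. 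Everything apart from this rigidity step is bookkeeping; the rigidity step is the crux.
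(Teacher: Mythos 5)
First, a point of comparison: the paper does not prove this proposition at all --- it is stated as Nielsen's classical theorem and described as a variant of Corollary~N4 of \cite{mks} --- so there is no in-paper argument to measure yours against. Judged on its own terms, your outline follows the standard route and the routine parts all check out: $\xi$ is the map induced by abelianisation; the homomorphism property under the row-vector, right-action convention is correct; the three matrices you name do generate $GL(2,\mathbb{Z})$ and are the $\xi$-images of elementary Nielsen transformations, so surjectivity needs only Nielsen's generation theorem for $\operatorname{Aut}(F(a,b))$; and $\operatorname{Inn}(F(a,b))\subseteq\operatorname{Ker}(\xi)$ is immediate. Your lemma that $[a,b]\phi$ is conjugate to $[a,b]^{\det\xi(\phi)}$ is also correct, and the generator-by-generator verification plus multiplicativity of $\det$ does propagate it to all of $\operatorname{Aut}(F(a,b))$.

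The genuine gap is exactly where you locate it, but it is a gap nonetheless. Your reduction leaves unproved the claim that a basis $(A,B)$ with $\bar A=\bar a$, $\bar B=\bar b$ and $[A,B]=[a,b]$ must equal $(g^{-1}ag,\,g^{-1}bg)$ for some $g$ --- equivalently, that the point stabiliser of $[a,b]$ in $\operatorname{Aut}(F(a,b))$ meets $\operatorname{Ker}(\xi)$ only in powers of conjugation by $[a,b]$. The fallback of ``invoking the classical computation underlying Corollary~N4'' is circular, since that corollary \emph{is} the assertion $\operatorname{Ker}(\xi)=\operatorname{Inn}(F(a,b))$; and ``a Nielsen-reduction analysis'' is the right idea but is the entire content of the theorem rather than bookkeeping. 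To close the argument you must actually run that analysis: Nielsen-reduce the pair $(A,B)$ towards $(a,b)$ and show that the constraints (triviality on $H_1$ together with the commutator condition) force every reducing step to be realised by a conjugation; this is Nielsen's cancellation argument, reproduced in \cite{mks} and in \cite{L-S}. Alternatively one may quote the identification of the stabiliser of $[a,b]$ in $\operatorname{Aut}(F(a,b))$ with the braid group $B_3$, whose surjection onto $SL(2,\mathbb{Z})$ has kernel the centre, generated by conjugation by $[a,b]$. As written, the proposal is a correct frame around a hole placed precisely where the classical proof does all of its work.
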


We now state Theorem~\ref{injecting}. The remainder of this section proves this theorem. We shall use $\sigma_x(W)$ to denote the exponent sum of the letter $x$ in the word $W$, and, for $\phi\in\aut(H)$ with $H$ some group, we shall use $\widehat{\phi}$ to denote the element of $\out(H)$ with representative $\phi$. Finally, note that if $G=\< a, b; R^n\>$ is a one-relator group with torsion then, using Moldovanskii rewriting \cite{fine1994freiheitssatz}, the relator $R^n$ can be re-written as a cyclically reduced word $S^n$ such that $\sigma_a(S)=0$ and $G\cong\<a, b; S^n\>$.

\begin{theorem}\label{injecting}
Let $G = \langle a, b; R^n\rangle$ with $n>1$. Suppose that $G$ is one-ended. Then $\operatorname{Out}(G)$ embeds in $\operatorname{Out}(F(a, b))$, and the embedding is as follows. Rewrite the relator $R$ such that $\sigma_a(R)=0$ and such that $R$ is cyclically reduced, and let $\widehat{\phi}$ be an element of $\out(G)$ with a representative Nielsen transformation $\phi_t: a\mapsto A$, $b\mapsto B$. Then the following map gives the embedding.
\begin{align*}
\Theta: \out(G)&\rightarrow \GL(2, \mathbb{Z})\\
\widehat{\phi}&\mapsto \left(\begin{array}{cc}\sigma_a(A) & \sigma_b(A)\\\sigma_a(B) & \sigma_b(B)\end{array}\right)
\end{align*}
\end{theorem}

The following corollary is obtained by applying the fact that every automorphism of $F(a, b)$ maps $[a, b]$ to a conjugate of $[a, b]^{\pm1}$ \cite[Theorem~3.9]{mks}.

\begin{corollary}\label{gl2z}
Let $G\cong\langle a, b; [a, b]^n\rangle$ with $n >1$. Then $\operatorname{Out}(G) \cong \GL_2(\mathbb{Z})$.
\end{corollary}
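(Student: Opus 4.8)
The plan is to combine Theorem~\ref{injecting} with the classical fact that automorphisms of a rank-two free group fix the commutator up to conjugacy and inversion. First I would observe that $R=[a,b]$ lies in $F(a,b)^{\prime}$ and has exponent sum zero in both generators, so it is non-primitive; hence Theorem~\ref{injecting} applies and $\operatorname{Out}(G)$ injects canonically into $\operatorname{Out}(F(a,b))$. Concretely, since $R$ is non-primitive Proposition~\ref{Pride} gives $L_{(a,b)}=\operatorname{Aut}(G)$, so $\operatorname{Out}(G)=L/\operatorname{Inn}(G)$, and the map $\theta$ of Figure~\ref{figure3} is an isomorphism onto its image $H_X/\operatorname{Inn}(F(a,b))\le\operatorname{Out}(F(a,b))$, where $H_X$ denotes the Nielsen transformations of $(a,b)$ that define automorphisms of $G$. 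It therefore suffices to prove the reverse inclusion, that every element of $\operatorname{Aut}(F(a,b))$ lies in $H_X$; equivalently, that $N:=\langle\langle [a,b]^n\rangle\rangle$ is characteristic in $F(a,b)$, so that the whole of $\operatorname{Out}(F(a,b))$ is realised.

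The key step is the lemma that for every $\phi\in\operatorname{Aut}(F(a,b))$ the element $\phi([a,b])$ is conjugate to $[a,b]$ or to $[a,b]^{-1}$. I would prove this by reducing modulo $\operatorname{Inn}(F(a,b))$: conjugation sends $[a,b]$ to a conjugate of itself, and by Proposition~\ref{transformationhomomorphism} the quotient $\operatorname{Out}(F(a,b))\cong GL(2,\mathbb{Z})$ is generated by the images of the elementary Nielsen maps $a\mapsto ab,\ b\mapsto b$ and $a\mapsto a^{-1},\ b\mapsto b$ together with the swap $a\mapsto b,\ b\mapsto a$. A direct computation shows each of these sends $[a,b]$ to a cyclic conjugate of $[a,b]^{\pm1}$, and the property ``$\phi([a,b])$ is conjugate to $[a,b]^{\pm1}$'' is preserved under composition and inversion, so the set of such $\phi$ is a subgroup containing a generating set, hence all of $\operatorname{Aut}(F(a,b))$.

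Granting the lemma, the conclusion is routine. For any $\phi\in\operatorname{Aut}(F(a,b))$ we have $\phi([a,b]^n)=\phi([a,b])^n$, which is a conjugate of $[a,b]^{\pm n}$ and so lies in $N$; thus $\phi(N)\subseteq N$, and applying the same to $\phi^{-1}$ gives $\phi(N)=N$. Hence every automorphism of $F(a,b)$ descends to an automorphism of $G=F(a,b)/N$, inner automorphisms descend to inner automorphisms, and so $H_X=\operatorname{Aut}(F(a,b))$. Combined with the injection of Theorem~\ref{injecting}, the canonical map is onto, giving $\operatorname{Out}(G)=\operatorname{Out}(F(a,b))$.

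I expect the main obstacle to be the commutator lemma. Although it is classical (it reflects the fact that $[a,b]$ traces the boundary of the once-punctured torus, which mapping classes preserve up to orientation), making it self-contained requires care in choosing convenient generators of $GL(2,\mathbb{Z})$ and verifying the conjugacy claims by explicit word computations; one must also be slightly careful that the statement concerns a single conjugacy class together with its inverse class, rather than literal fixing of $[a,b]$.
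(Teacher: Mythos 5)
Your proposal is correct and follows essentially the same route as the paper: injectivity comes from Theorem~\ref{injecting}, and surjectivity from the fact that every automorphism of $F(a,b)$ sends $[a,b]$ to a conjugate of $[a,b]^{\pm 1}$, hence preserves the normal closure of $[a,b]^n$ and descends to $G$. The only difference is that the paper simply cites Theorem~3.9 of \cite{mks} for the commutator fact, whereas you prove it by checking generators of $\operatorname{Out}(F(a,b))$ --- and you are right to flag the $[a,b]^{-1}$ possibility, which the paper's citation elides but which is harmless here since $\langle\langle [a,b]^n\rangle\rangle$ is insensitive to inversion.
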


The embedding $\Theta$ in Theorem~\ref{injecting} is the composition of the map $\theta_1:\out(G)\rightarrow\out(F(a, b)), \widehat{\phi}\mapsto\widehat{\phi}_t$ from Proposition~\ref{Pride}, which is the reverse of the inducing homomorphism $\theta$, with the isomorphism $\overline{\xi}:\out(F(a, b))\rightarrow \GL_2(\mathbb{Z})$ induced by the map $\xi$ from Proposition~\ref{transformationhomomorphism}. Therefore, to prove Theorem~\ref{injecting} it is sufficient to prove that the inducing homomorphism $\theta: H\twoheadrightarrow\out(G)$ is injective.

The proof of Theorem~\ref{injecting} is split into two cases: $R\in F(a, b)^{\prime}$ (Lemma~\ref{lem:injectingderived}) and $R\not\in F(a, b)^{\prime}$ (Lemma~\ref{lemma3}), where $F(a, b)^{\prime}$ denotes the derived subgroup of $F(a, b)$.

\begin{lemma}\label{lem:injectingderived}
Let $G = \langle a, b; R^n\rangle$ with $n>1$ and $R\in F(a, b)^{\prime}$. Then $\operatorname{Out}(G)$ embeds into $\operatorname{Out}(F(a, b))$ by the map given in Theorem~\ref{injecting}.
\end{lemma}

\begin{proof}
It is sufficient to prove that if $\phi_t$ is a Nielsen transformation with $\theta(\phi_t)\in \operatorname{Inn}(G)$, where $\theta$ is the inducing homomorphism, then $\phi\in \operatorname{Inn}(F(a, b))$. So, let $\phi_t$ be a Nielsen transformation of the pair $(a, b)$ with $\phi_t(a):=A$ and $\phi_t(b):=B$ and such that there exists $W\in F(a, b)$ with $a^W =_G A$ and $b^W =_G B$, and we prove that $\phi_t\in\inn(F(a, b))$.
As $a^W=A$ and $b^W=B$ in $G$ it must hold that $a^W=A\mod G^{\prime}$ and $b^W=B\mod G^{\prime}$. However, $G^{ab} = \langle a, b; [a, b]\rangle \cong \mathbb{Z} \times \mathbb{Z}$, as $R \in F(a, b)^{\prime}$. Therefore, it must hold that $\sigma_a(A)=1$ and $\sigma_b(A) = 0$, and that $\sigma_a(B)=0$ and $\sigma_b(B)=1$. Proposition~\ref{transformationhomomorphism} then implies $\phi_t\in\operatorname{Inn}(F(a, b))$, as required.
\end{proof}

We now prove Theorem~\ref{injecting} in the case when $G=\<a, b; R^n\>$ is one-ended and $R\not\in F(a, b)^{\prime}$. We begin by proving Lemma~\ref{lemma1}, which gives a description of the automorphisms of such a group $G$. The statement of Lemma~\ref{lemma1} assumes that the relator $R^n$ in $G=\<a, b; R^n\>$ is such that $\sigma_a(R)=0$, $\sigma_b(R)\neq 0$ and $R\neq b^{\epsilon}$. These assumptions are valid by Moldovanskii rewriting \cite{fine1994freiheitssatz}. Recall that if $\psi\in\aut(G)$ then $\widehat{\psi}$ denotes the element of $\out(G)$ with representative $\psi$.

\begin{lemma}\label{lemma1}
Let $G=\langle a, b; R^n\rangle$ with $n>1$, $\sigma_a(R)=0$, $\sigma_b(R)\neq 0$, and $R\neq b^{\epsilon}$ cyclically reduced. Suppose that $\psi$ is an arbitrary automorphism of $G$. Then $\widehat{\psi} \in \out(G)$ has a representative $\phi \in \widehat{\psi}$ of the following form.
\begin{align*}\phi: a&\mapsto a^{\epsilon_0}b^k\\ b&\mapsto b^{\epsilon_1}\end{align*}
\end{lemma}

\begin{proof}
Note that if $\phi: a\mapsto a^{\epsilon_0}b^k, b\mapsto b^{\epsilon_1}$ is a homomorphism then it is also an automorphism, as $G$ is Hopfian. By Proposition~\ref{Pride}, the automorphism $\psi$ can be realised as a Nielsen transformation $\psi_t$ of the pair $(a, b)$, where $\psi_t(a):\equiv A$ and $\psi_t(b):\equiv B$.

Let $\pi: G\rightarrow G^{ab}$ be the abelianisation map. The abelianisation has presentation $G^{ab} = \langle x, y; y^{m}, [x, y] \rangle$, where $x:=\pi(a)$ and $y:=\pi(b)$ while $m:=\sigma_b(R)$. Let $x^iy^{\alpha}:=\pi(A)$ and let $x^jy^{\beta}:=\pi(B)$. Then automorphisms of $G$ induce automorphisms of $G^{ab}$, so $\pi(B)$ has order $m\neq 0$. We therefore have the following.
\[(x^jy^{\beta})^{m}=1 \Rightarrow x^{mj}y^{m\beta} = 1 \Rightarrow x^{mj}=1\]
Then $j=0$ as $x$ has infinite order in $G^{ab}$, and so $\pi(B) = b^{\beta}$. Hence, $\sigma_a(B)=0$. Therefore, applying Proposition~\ref{transformationhomomorphism}, the Nielsen transformation $\psi_t$ corresponds to the following matrix of $\GL_2(\mathbb{Z})$.
\[
\left(\begin{array}{cc}\sigma_a(A) & \sigma_b(A)\\ 0 & \sigma_b(B)\end{array}\right)
\]
Hence, $|\sigma_a(A)|=1=|\sigma_b(B)|$. Taking $k:=\sigma_b(A)$, $\epsilon_0:=\sigma_a(A)$ and $\epsilon_1:=\sigma_b(B)$, the Nielsen transformation
$\phi_t: a\mapsto a^{\epsilon_0}b^k, b\mapsto b^{\epsilon_1}$
also corresponds to the matrix $M$. If two Nielsen transformations are equal modulo $\inn(F(a, b))$ then they are equal modulo $\inn(G)$, and so taking $\phi:=\phi_t$ we are done.
\end{proof}

We now apply Lemma~\ref{lemma1} to prove of the case of $R\not\in F(a, b)^{\prime}$ in Theorem~\ref{injecting}.

\begin{lemma}\label{lemma3}
Let $G=\langle a, b; R^n\rangle$ with $n>1$, $\sigma_a(R)=0$, $\sigma_b(R)\neq 0$ and $R\neq b^{\epsilon}$ cyclically reduced. Then $\operatorname{Out}(G)$ embeds in $\operatorname{Out}(F(a, b))$ by the map given in Theorem~\ref{injecting}.
\end{lemma}

\begin{proof}
It is sufficient to prove that if $\phi$ is an inner automorphism of $G$, $\phi \in \operatorname{Inn}(G)$, such that $\phi: a\mapsto a^{\epsilon_0}b^k$, $b\mapsto b^{\epsilon_1}$ then $a\equiv a^{\epsilon_0}b^k$ and $b\equiv b^{\epsilon_1}$, by Lemma~\ref{lemma1}.
So, let $\phi: a\mapsto a^{\epsilon_0}b^k$, $b\mapsto b^{\epsilon_1}$ with either $\epsilon_0\neq 1$, or ${\epsilon_1}\neq 1$, or $k\neq 0$, and assume that $\phi$ is inner, $\phi\in \operatorname{Inn}(G)$. Therefore, there exists some word $W(a, b)$ such that $a^W=_Ga^{\epsilon_0}b^k$ and $b^W=_Gb^{\epsilon_1}$. By the malnormality of the subgroup $\langle b\rangle$ \cite{newman1973soluble}, we can assume $W\equiv b^i$.

We shall now prove that $i\neq 0$ (so $W\neq_G1$). Suppose that $i=0$, so $a=_G a^{\epsilon_0}b^k$ and $b=_Gb^{\epsilon_1}$. If $\epsilon_0=-1$ then $a^2=b^k$, but $a$ has infinite order in the abelianisation while $b$ has finite order, a contradiction. Therefore, if $i=0$ then $\epsilon_0=1$ and so $b^{\epsilon_1}=_G b$ and $b^k=_G 1$. Now, $b$ has infinite order \cite[Corollary 4.11]{mks}, and so $\epsilon_1=1$ and $k=0$, a contradiction. Thus, $i\neq0$.

As $i\neq 0$ we have that $b^{-i}ab^i=_Ga^{\epsilon_0}b^{k}$, so $a^{\epsilon_0}b^{k-i}a^{-1}b^i=_G1$. We shall prove that $a^{\epsilon_0}b^{k-i}a^{-1}b^i$ cannot represent the trivial word in $G$, which is a contradiction and so proves the lemma. Note that if a word $U(a, b)=_G1$ then $\sigma_a(U)=0$, because the order of $a$ under the abelianisation map is infinite. Thus, $\sigma_a(a^{\epsilon_0}b^{k-i}a^{-1}b^i)=0$ and so $\epsilon_0=1$.
Moreover, $i\neq k$ as the generator $b$ has infinite order. Hence, $\epsilon_0=1$ and $i\neq k$.

So, writing $j:=k-i$, we have that $ab^ja^{-1}b^i=_G1$, for $i, j\neq 0$. However, by the Newman-Gurevich Spelling Theorem~\cite{HP}, a word of this form cannot represent the trivial word in a group $G=\<a, b;R^n\>$ under the restrictions of this lemma.
\end{proof}

We now assemble the proof of Theorem~\ref{injecting}.

\begin{proof}[Proof of Theorem~\ref{injecting}.]
Let $G=\<a, b; R^n\>$. By applying Moldovanskii rewriting, we can assume that $\sigma_a(R)=0$ \cite{fine1994freiheitssatz}. If $R\in F(a, b)$ then the result follows from Lemma~\ref{lem:injectingderived}. If $R\not\in F(a, b)^{\prime}$ then the result follows from Lemma~\ref{lemma3}.
\end{proof}

\section{The possibilities for $\out(G)$ when it is infinite}
\label{sec:outinfinite}

In this section we work under the assumption that $G$ is a one-ended two-generator, one-relator group with torsion with $\out(G)$ infinite and determine, in Theorem~\ref{thm:possifoutinfinite}, the possible isomorphism classes for $\out(G)$. We prove, in Lemma~\ref{lem:genInfiniteCase}, that every possibility occurs. In this section we further assume that $G\not\cong \langle a, b; [a, b]^n\rangle$, as this case was dealt with in Corollary~\ref{gl2z}.

Throughout this section we use Theorem~\ref{injecting} to view the relevant outer automorphism groups as matrix groups.

\subsection{The form of outer automorphisms when $\out(G)$ is infinite}
\label{sec:formofAutsforOutInf}

We begin by proving, in Lemma~\ref{lem:OutInfSplit}, that the relator $R$ can be rewritten in a particularly nice way (in Section~\ref{sec:applications} we show that this rewriting can be made algorithmic). This new form for the relator will allow us to view automorphisms as Nielsen transformations: previously, by Proposition~\ref{Pride}, we only knew that such a view existed. Note that under the assumptions of this section $\out(G)$ is virtually-$\mathbb{Z}$, by Theorem~\ref{thm:JSJdecomp}.

The proof of Lemma~\ref{lem:OutInfSplit} is based on arguments of Kapovich-Weidmann \cite{kapovich1999structure}.

\begin{lemma}
\label{lem:OutInfSplit}
Let $G=\<a, b; R^n\>$ with $n>1$. Suppose that $G\not\cong \langle a, b; [a, b]^n\rangle$ and that $G$ is one-ended. Then the following equivalence holds.
\[\out(G)\textnormal{ is virtually-}\mathbb{Z}\Longleftrightarrow G\cong\langle a, b; S^n(a^{-1}ba, b)\rangle\]
\end{lemma}

\begin{proof}
Suppose that $G\cong \langle a, b; S^n(a^{-1}ba, b)\rangle$. Then the map $a\mapsto ab$, $b\mapsto b$ is an automorphism of $G$, and it has infinite order by Theorem~\ref{injecting}. As $G\not\cong \langle a, b; [a, b]^n\rangle$, Theorem~\ref{thm:JSJdecomp} implies that $\out(G)$ is virtually-$\mathbb{Z}$.

Suppose that $\out(G)$ is virtually-$\mathbb{Z}$. Then $G$ splits as an HNN-extension or free product with amalgamation with vertex groups having finite center and edge groups virtually cyclic with infinite center \cite[Theorem~$1.4$]{levitt2005automorphisms}. The edge groups must be infinite cyclic \cite{karrass1971subgroups}, so $G=H\ast_{C_X^t=C_Y}$ or $G=H\ast_{C}K$ with $C_X, C_Y, C\cong\mathbb{Z}$ and $H, K\not\cong\mathbb{Z}$. We shall prove that $G$ must split as an HNN-extension $H\ast_{C_X^t=C_Y}$ where the base group $H$ is two-generated, which implies that the subgroup $H$ is a two-generator, one-relator group with torsion \cite{pride1977twogensubgps}. This implies that $G$ is isomorphic to a group of the required form.

Suppose that $G$ splits as a free product with amalgamation $G=H\ast_{C}K$ with $C\cong\mathbb{Z}$ and $H, K\not\cong\mathbb{Z}$, and we shall prove that either $H$ or $K$ is infinite cyclic, a contradiction. Begin by noting that $H$ and $K$ are both hyperbolic \cite[Theorem~6]{kharlampovich1998hyperbolic}, and that the amalgamating subgroup $C$ is malnormal in $H$ or $K$ \cite[Corollary~2]{kharlampovich1998hyperbolic}. We suppose, without loss of generality, that $C$ is malnormal in $K$. Now, $C$ is contained in a malnormal infinite cyclic subgroup $H_0$ of $G$ \cite[Lemma~5.5]{logan2014JSJ}.
Note that $H_0\leq H$ because $C$ is malnormal in $K$. As $H$ is hyperbolic, the amalgamating subgroup $C$ is contained in a unique maximal, virtually-cyclic subgroup of $H$, and this must be $H_0$. We shall now prove that $H_0=H$, which is our required contradiction as $H_0\cong\mathbb{Z}$ but $H\not\cong\mathbb{Z}$. Assume otherwise, so $H_0\lneq H$, and we shall look for a contradiction. Then $G$ can be written
$G=H\ast_{H_0}K_0$ where $K_0=H_0\ast_CK$ is the group generated by $H_0$ and $K$. Then $H_0$ is malnormal in $H$ while its image is malnormal in $K_0$ \cite[Lemma~3.1.10]{logan2014outer}.%
\footnote{Kapovich--Weidmann \cite{kapovich1999structure} use this result without proof. The citation is for completeness, and the proof is a straightforward application of results on conjugation \cite[Theorem~4.6]{mks} and commutativity \cite[Theorem~4.5]{mks} in free products with amalgamation.}
However, $G$ is a one-ended, two-generated group and so cannot have the form $P\ast_QR$ where $Q$ is malnormal in both $P$ and $R$ \cite[Theorem~6]{karrass1971free}, a contradiction. Hence, $H=H_0$, a contradiction. We conclude that $G$ does not split as $H\ast_CK$ where $C\cong\mathbb{Z}$ and $H, K\not\cong\mathbb{Z}$.

Therefore, $G\cong H\ast_{C_X^t=C_Y}$ with $C_X, C_Y\cong\mathbb{Z}$. It is now sufficient to prove that the base group $H$ is two-generated. Begin by noting that the associated subgroups $C_X$ and $C_Y$ are subgroups of malnormal, infinite cyclic subgroups of $G$ \cite[Lemma~5.1]{logan2014JSJ}, and indeed that one of $C_X$ or $C_Y$ is malnormal in $H$ and $C_X\cap C_Y^g$ is trivial for all $g\in H$ \cite[Corollary~1]{kharlampovich1998hyperbolic}. Therefore, $G=\<H, t; t^{-1}x^mt=y\>$ where $\<x\>$ and $\<y\>$ are malnormal in $H$. This all implies that there exists some $h\in H$ such that $G=\<th, x\>$ \cite[Corollary~3.1]{kapovich1999two}. 

Write $d=h^{-1}yh$ and $s=th$, so $\<s, x\>=G$ and we shall prove that $\<x, d\>=H$. Note that $\< x, d\>\leq H$, so assume that it is a proper subgroup and look for a contradiction. So, suppose that there exists $g\in H\setminus \< x, d\>$. Then $g$ can be written in terms of $x$ and $s$, as these elements generate $G$, and, moreover, $g$ can be written in terms of $x$, $s$ and $d$. Write $g$ as a word $W$ in $x$, $s$ and $d$ such that the number of occurrences of $s$ is minimal. That is, write $g$ in the following way, where $h_j\in\< x, d\>$ and $k$ is minimal (note that $h_i$ can be trivial).
\[
g=_GW(x, s, d)=h_0s^{\epsilon_1}h_1s^{\epsilon_2}h_2\ldots s^{\epsilon_k}h_k
\]
Note that $k>0$ as $g\not\in\< x, d\>$. If $W$ has a subword of the form $s^{-1}x^{lm}s$ then this can be replaced by $d^l$ to gain a word with fewer $s$-terms. Similarly, if $W$ has a subword of the form $sd^ls^{-1}$ then this can be replaced by $x^{lm}$ to gain a word with fewer $s$-terms. Therefore, by the minimality of $k$, $W$ is a reduced sequence for $g$ which contains $k>0$ $t$-terms. Applying Britton's Lemma \cite{L-S}, we have that $g\not\in H$, a contradiction. Therefore, $H=\< x, d\>$, as required.
\end{proof}

In Lemma~\ref{lemma12}, a key result in our proof of Theorem~\ref{thm:possifoutinfinite}, we prove under certain assumptions that if $\psi\in\aut(G)$ then $\widehat{\phi}\in\out(G)$ has a representative $\psi\in\widehat{\phi}$ of one of the following four forms.
\begin{align*}
\alpha_i: a&\mapsto a^{-1}b^i &\beta_i: a&\mapsto ab^i &\zeta_i: a&\mapsto a^{-1}b^i &\delta_i: a&\mapsto ab^i\\
b&\mapsto b &b&\mapsto b^{-1} & b&\mapsto b^{-1} &b&\mapsto b
\end{align*}
We shall use the labels $\alpha_i$, $\beta_i$, $\zeta_i$, and $\delta_i$ in the rest of Section~\ref{sec:formofAutsforOutInf} to refer to these forms, and we make no notational distinction between viewing these maps as Nielsen tranformations or as automorphisms of $G$. We later (Section~\ref{outgroups}) use $\alpha:=\alpha_0$, $\beta:=\beta_0$, $\zeta:=\zeta_0$ and $\delta:=\delta_1$.

\p{Virtually cyclic subgroups} We can assume that $\delta_1$ is an automorphism of $G$ by Lemma~\ref{lem:OutInfSplit}. We shall now work out all possible virtually cyclic subgroups of $\GL_2(\mathbb{Z})$ which contain the following matrix $\Delta_1$, which corresponds to the outer automorphism $\widehat{\delta}_1$ under the embedding given by Theorem~\ref{injecting}.
\[\Delta_1=\left( \begin{array}{cc}
1 & 1 \\
0 & 1 \end{array} \right)\]
Lemma~\ref{lem:matrixpowers} allows roots of the matrix $\Delta_i:=\Delta_1^i$, $i\neq 0$, to be computed. Note that $\Delta_i$ corresponds to $\widehat{\delta}_i$. Lemma~\ref{lem:matrixpowers} is easily proven by induction on the power $m$, and so the proof is left to the reader.

\begin{lemma}\label{lem:matrixpowers}
Let $A$ be a matrix from $\GL_2(\mathbb{Z})$: \[ A= \left( \begin{array}{cc}
a & b \\
c & d \end{array} \right)\]
Then $A^m$ has the following form where $x_m$, $y_m$ and $z_m$ are such that $x_m-y_m=z_m(a-d)$.
\[A^m=\left( \begin{array}{cc}
x_m & bz_m \\
cz_m & y_m \end{array} \right)\]
\end{lemma}

We apply lemma~\ref{lem:matrixpowers} to the following result, which tells us about certain virtually cyclic subgroups of $\GL_2(\mathbb{Z})$. Recall that if $\psi\in\aut(H)$ for some group $H$ then $\widehat{\psi}$ denotes the element of $\out(H)$ with representative $\psi$.

\begin{lemma}\label{lem:form1}
Let $\psi_t\in \aut(F(a, b))$ be such that $\langle \widehat{\psi}_t, \widehat{\delta_1}\rangle$ is a virtually cyclic subgroup of $\out(F(a, b))$. Then $\widehat{\psi}_t$ corresponds to $\widehat{\alpha}_i$, $\widehat{\beta}_i$, $\widehat{\zeta}_i$, or $\widehat{\delta}_i$ in $\out(F(a, b))$.
\end{lemma}

\begin{proof}
Our proof uses the equivalence of $\out(F(a, b))$ and $\GL_2(\mathbb{Z})$. Write $\Delta:=\Delta_1$ for the matrix corresponding to $\widehat{\delta}_1$ and $\Psi$ for the matrix corresponding to $\widehat{\psi}$. Using the notation of Lemma~\ref{lem:matrixpowers}, take $A:=\Psi$ and we shall use Lemma~\ref{lem:matrixpowers} to prove that $c=0$. This is sufficient as then $|a|=1=|d|$, by looking at the determinant of $\Psi$, which implies that $\widehat{\psi}$ is of the required form.

Suppose that $\Psi$ has infinite order. Then $\Psi^j=\Delta^k$ for some $j, k\neq 0$. Therefore, $x_j=1=y_j$, $cz_j=0$ and $bz_j=k\neq 0$. Thus, $z_j\neq 0$ and so $c=0$, as required.

Suppose that $\Psi$ has finite order. Then, $\Psi\Delta\Psi^{-1}$ has infinite order and so $\Psi\Delta^j\Psi^{-1}=\Delta^k$ for some $j, k\neq 0$. We have the following, for $\epsilon:=\operatorname{det}(\Psi)=\pm1$.
\begin{align*}
\Psi\Delta^j\Psi^{-1}
&=\epsilon\left( \begin{array}{cc}
a & b \\
c & d \end{array} \right)
\left( \begin{array}{cc}
1 & j \\
0 & 1 \end{array} \right)
\left( \begin{array}{cc}
d & -b \\
-c & a \end{array} \right)\\
&=\epsilon\left( \begin{array}{cc}
ad-bc-jac & ja^2 \\
-jc^2 & jac+ad-bc \end{array} \right)\\
&=\Delta^k
\end{align*}
Then because $j\neq 0$ we have that $c=0$, as required.
\end{proof}

Combining Lemmas~\ref{lem:OutInfSplit}~and~\ref{lem:form1} gives the following result.

\begin{lemma}\label{lemma12}
Let $G\cong\langle a, b; S^n(a^{-1}ba, b)\rangle$ with $n>1$. Suppose that $G\not\cong \< a, b; [a, b]^n\>$ and that $G$ is one-ended. If $\psi\in\aut(G)$ then $\widehat{\psi}$ corresponds to $\widehat{\alpha}_i$, $\widehat{\beta}_i$, $\widehat{\zeta}_i$, or $\widehat{\delta}_i$.
\end{lemma}

\subsection{The possibilities}
\label{outgroups}
We now prove Theorem~\ref{thm:possifoutinfinite}, which gives the possible isomorphism classes for $\out(G)$ under the restrictions of this section, and Lemma~\ref{lem:realiseinfinite}, which gives explicit examples for each of the possible generating sets in Theorem~\ref{thm:possifoutinfinite}.

First we determine, in a certain sense, the possible generators for $\out(G)$. We use the notation $\delta:=\delta_1$, $\alpha:=\alpha_0$, $\beta:=\beta_0$ and $\zeta:=\zeta_0$ for these generators.

\begin{lemma}
\label{lem:genInfiniteCase}
Let $G=\langle a, b\rangle$ be an arbitrary two-generator group. Suppose that $\delta\in\aut(G)$. Then the following hold for all $i\in\mathbb{Z}$.
\begin{enumerate}
\item If $\alpha_i\in\aut(G)$ then $\alpha\in\aut(G)$.
\item If $\beta_i\in\aut(G)$ then $\beta\in\aut(G)$.
\item If $\zeta_i\in\aut(G)$ then $\zeta\in\aut(G)$.
\end{enumerate}
\end{lemma}

\begin{proof}
The result holds as $\alpha_i=\delta^i\alpha$, $\beta_i=\delta^{-i}\beta$, and $\zeta_i=\delta^{-i}\zeta$.
\end{proof}

Theorem~\ref{thm:possifoutinfinite} gives the possible isomorphism classes when $\out(G)$ is virtually-$\mathbb{Z}$.

\begin{theorem}
\label{thm:possifoutinfinite}
Let $G=\<a, b; R^n\>$, with $n>1$. Suppose that $G\not\cong\<a, b; [a, b]^n\>$ and that $G$ is one-ended. If $\out(G)$ is infinite then $\out(G)$ is isomorphic to one of $D_{\infty}\times C_2$, $D_{\infty}$, $\mathbb{Z}\times C_2$ or $\mathbb{Z}$.

Moreover, there are five choices of generating set for $\out(G)$: we always have $\delta\in\operatorname{Aut}(G)$, and either none of, one of or all three of $\alpha$, $\beta$ and $\zeta$ are automorphisms of $G$. The following isomorphisms then hold.
\begin{enumerate}
\item\label{genset1} If $\alpha, \beta, \zeta \not\in \operatorname{Aut}(G)$ then $\operatorname{Out}(G) \cong \mathbb{Z}$.
\item\label{genset2} If $\alpha\in \operatorname{Aut}(G)$ but $\beta, \zeta \not\in \operatorname{Aut}(G)$ then $\operatorname{Out}(G) \cong D_{\infty}$.
\item\label{genset3} If $\beta\in \operatorname{Aut}(G)$ but $\alpha, \zeta \not\in \operatorname{Aut}(G)$ then $\operatorname{Out}(G) \cong D_{\infty}$.
\item\label{genset4} If $\zeta \in \operatorname{Aut}(G)$ but $\alpha, \beta \not\in \operatorname{Aut}(G)$ then $\operatorname{Out}(G) \cong \mathbb{Z} \times C_2$.
\item\label{genset5} If $\alpha, \beta, \zeta \in \operatorname{Aut}(G)$ then $\out(G)\cong D_{\infty} \times C_2$.
\end{enumerate}
\end{theorem}

\begin{proof}
By Lemma~\ref{lem:OutInfSplit}, we can assume that $R\in \langle a^{-1}ba, b\rangle$, so $\delta\in\aut(G)$. Lemmas~\ref{lemma12}~and~\ref{lem:genInfiniteCase} then give the generating sets (\ref{genset1})--(\ref{genset5}). The isomorphisms are easily computed using the embedding of Theorem~\ref{injecting}, which proves the theorem.
\end{proof}

The following lemma, Lemma~\ref{lem:realiseinfinite}, implies that each of the possible groups and generating sets in Theorem~\ref{thm:possifoutinfinite} occur. The examples in Lemma~\ref{lem:realiseinfinite} can be verified by checking whether or not $\phi(R)$ is freely conjugate to $R^{\pm 1}$ for each $\phi\in\{\alpha, \beta,\zeta\}$. This works because none of the maps $\alpha$, $\beta$ or $\zeta$ change the length of the relator $R$ so we can apply the Newman--Gurevich Spelling Theorem~\cite{HP}.
\begin{lemma}
\label{lem:realiseinfinite}
For each $Q\in\{D_{\infty}\times C_2, D_{\infty}, \mathbb{Z}\times C_2, \mathbb{Z}\}$ there exists a group $G=\langle a, b; R^n\>$, $n>1$, with $\out(G)\cong Q$. The following groups give explicit examples.
\begin{enumerate}
\item If $G = \<a, b; (aba^{-1}b^2ab^3a^{-1}b^4)^n\>$ then $\alpha, \beta, \zeta \not\in \operatorname{Aut}(G)$ and $\out(G)\cong\mathbb{Z}$.
\item If $G = \<a, b; (aba^{-1}b^2ab^3a^{-1}bab^2a^{-1}b^3)^n\>$ then $\alpha\in \operatorname{Aut}(G)$ but $\beta, \zeta \not\in \operatorname{Aut}(G)$ and $\out(G)\cong D_{\infty}$.
\item If $G = \<a, b; (aba^{-1}b^2)^n\>$ then $\beta\in \operatorname{Aut}(G)$ but $\alpha, \zeta \not\in \operatorname{Aut}(G)$ and $\out(G)\cong D_{\infty}$.
\item If $G = \<a, b; (aba^{-1}b^2ab^2a^{-1}bab^3a^{-1}b^3)^n\>$ then $\zeta \in \operatorname{Aut}(G)$ but $\alpha, \beta \not\in \operatorname{Aut}(G)$ and $\out(G)\cong\mathbb{Z}\times C_2$.
\item If $G = \<a, b; (aba^{-1}b)^n\>$ then $\alpha, \beta, \zeta \in \operatorname{Aut}(G)$ and $\out(G)\cong D_{\infty}\times C_2$.
\end{enumerate}
\end{lemma}

\section{The possibilities for $\out(G)$ when it is finite}
\label{sec:outfinite}

In this section we work under the assumption that $G$ is a one-ended two-generator, one-relator group with torsion with $\out(G)$ finite and we determine the possible isomorphism classes for $\out(G)$. We prove that every possibility occurs.

Suppose that $G$ is one-ended with $\out(G)$ finite. Then $\out(G)$ is isomorphic to a finite subgroup of $\GL_2(\mathbb{Z})$, by Theorem \ref{injecting}, and hence to a subgroup of $D_6$ or of $D_4$ \cite{zimmermann1996finite}, where $D_{n}$ denotes the dihedral group of order $2n$. We now prove that six of these nine groups can be realised in this way. The remaining three groups are dealt with in Lemma~\ref{lem:examplesFiniteOut}, which we state but do not prove.

\begin{lemma}
\label{lem:realisefinitebatch1}
For each $Q\in \{D_6, D_4, D_3, C_6, C_4, C_3\}$ there exists a group $G=\langle a, b; R^n\rangle$, $n>1$, with $\out(G)\cong Q$. The following groups give explicit examples.
\begin{enumerate}
\item If $G=\langle a, b; (a^2bab^2a^{-2}b^{-1}a^{-1}b^{-2})^n\rangle$ then $\out(G)\cong D_6$.
\item If $G= \langle a, b; [a^2, b^2]^n\rangle$ then $\out(G)\cong D_4$.
\item If $G= \langle a, b; (a^2(ab)^{-2}b^2)^n\rangle$ then $\out(G)\cong D_3$.
\item If $G= \langle a, b; R^n\rangle$ where $R$ is the word
\begin{align*}
a^2&b^3aba^{-1}b^{-2}ababa^2ba^{-1}b^{-1}a^{-1}b^{-1}a^2b^{-1}ab\\
&a^{-2}b^{-3}a^{-1}b^{-1}ab^2a^{-1}b^{-1}a^{-1}b^{-1}a^{-2}b^{-1}ababa^{-2}ba^{-1}b^{-1}
\end{align*}
then $\out(G)\cong C_6$.
\item If $G= \langle a, b; (ab^2aba^{-2}ba^{-1}b^{-2}a^{-1}b^{-1}a^2b^{-1})^n\rangle$ then $\out(G)\cong C_4$.
\item If $G= \langle a, b; (ab^{-1}a^2b^{-1}a^{-2}b^{-1}a^{-1}b^{-1}a^{-1}bab^3)^n\rangle$ then $\out(G)\cong C_3$.
\end{enumerate}
\end{lemma}

\begin{proof}
We shall use the fact that if $\out(G)$ is infinite and $G\not\cong\langle a, b; [a, b]^n\rangle$ then any finite order elements of $\out(G)$ have order two, which follows from Lemma~\ref{thm:possifoutinfinite}. We also use the fact that none of the groups $G$ in the statement of the lemma are isomorphic to $\<a, b; [a, b]^n\>$, as $\<a, b; R^n\>\cong \<a, b; [a, b]^n\>$
if and only if
the word $R$ is mapped to $[a, b]^{\epsilon}$ under some Nielsen transformation of $F(a, b)$ \cite{Pride1977},
if and only if
$R$ is freely conjugate to $[a, b]^{\pm1}$ \cite[Theorem~3.9]{mks}. Note that by Theorem~\ref{injecting}, all interactions between outer automorphisms can be verified by viewing them as elements of $\GL_2(\mathbb{Z})$.
\pf{The $D_6$ case} Note that the maps $\phi: a\mapsto b^{-1}, b\mapsto ab$ and $\psi: a\mapsto ab$, $b\mapsto b^{-1}$ define automorphisms of $G= \langle a, b; (a^2bab^2a^{-2}b^{-1}a^{-1}b^{-2})^n\rangle$. Now, $\out(G)$ is finite because $\widehat{\phi}$ has order six but $G\not\cong\<a, b;[a, b]^n\>$. Therefore, $\out(G)$ is isomorphic to either $D_6$ or $C_6$. Then, as $\widehat{\phi}^3\neq\widehat{\psi}$ but $\widehat{\psi}$ has order two, we conclude that $\out(G)\cong D_6$, as required.
\pf{The $D_4$ case} This is similar to the $D_6$ case, using the maps $\phi: a\mapsto b, b\mapsto a^{-1}$ and $\psi: a\mapsto b$, $b\mapsto a$
\pf{The $D_3$ case} Note that the maps $\phi: a\mapsto a^{-1}b^{-1}, b\mapsto a$ and $(\beta_1=)\psi: a\mapsto ab$, $b\mapsto b^{-1}$ define automorphisms of $G= \langle a, b; (a^2(ab)^{-2}b^2)^n\rangle$. As in the $D_6$ case, $\out(G)$ is finite because $\widehat{\phi}$ has order three but $G\not\cong\<a, b;[a, b]^n\>$. Noting that $\widehat{\psi}$ has order two, $\out(G)$ contains an element of order three and an element of order two, and so is isomorphic to one of $D_6$, $C_6$ or $D_3$. Suppose $\out(G)$ contains an element of order six, and consider the embedding of $\out(G)$ in $\GL_2(\mathbb{Z})$. The matrices $\Omega_1$ and $\Omega_2$ are the only matrices which satisfy the relation $\Omega^2=\Phi$, where the matrix $\Phi$ is the image of $\widehat{\phi}$ in $\GL_2(\mathbb{Z})$, and so one of these must have order six.
\[
\Omega_1=\left(\begin{array}{cc}
0 & -1\\
1&1
\end{array}
\right)
\:\:\:\:
\Omega_2=\left(\begin{array}{cc}
0 & 1\\
-1&-1
\end{array}
\right)
\]
However, $\Omega_2=\Phi^{-1}$ has order three while any Nielsen transformation which corresponds to $\Omega_1$ does not preserve the relation of the group, $(a^2(ab)^{-2}b^2)^n$, and so does not correspond to an automorphism of $G$, a contradiction. Thus, $\out(G)$ contains no element of order six and so $\out(G)\cong D_3$, as required.
\pf{The $C_{6}$ case} This is similar to the $D_3$ case, using the map $\phi: a\mapsto b^{-1}, b\mapsto ab$. We suppose that $\out(G)\cong D_6$, and obtain six matrices, $\Psi_1, \ldots, \Psi_6$, which are of order two and which satisfy the relator $(\Phi\Psi_i)^2$, where $\Phi$ is the image of $\widehat{\phi}$ in $\GL_2(\mathbb{Z})$. However, any Nielsen transformation which corresponds to one of these six matrices does not preserve the relator of the group, and so does not define an automorphism of $G$, a contradiction.
\pf{The $C_{4}$ case} This is similar to the $D_3$ case, using the map $\phi: a\mapsto b, b\mapsto a^{-1}$.
\pf{The $C_{3}$ case} This is similar to the $D_3$ case, using the map $\phi: a\mapsto a^{-1}b^{-1}, b\mapsto a$. Note that we have to eliminate each of $D_6$, $C_6$ and $D_3$.
\end{proof}

The remaining three finite subgroups of $\GL_2(\mathbb{Z})$ do each occur as $\out(G)$ with $G$ one-ended. Indeed, the following lemma, Lemma~\ref{lem:examplesFiniteOut}, gives explicit examples. We do not prove Lemma~\ref{lem:examplesFiniteOut} here as the proof is disproportionately long. The issue is that the three remaining groups all embed into $D_{\infty}\times C_2$ (hence the methods used to prove Lemma~\ref{lem:realisefinitebatch1} will not work).
The result may be verified by applying an algorithm of Dahmani-Guirardel \cite[Theorem~3]{dahmani2011isomorphism}, while the author has proven Lemma~\ref{lem:examplesFiniteOut} using a different method in his PhD thesis \cite[Section 3.4.1]{logan2014outer}.

\begin{lemma}
\label{lem:examplesFiniteOut}
For $Q$ each of the groups $C_2\times C_2$, $C_2$ and the trivial group there exists a group $G=\langle a, b; R^n\rangle$, $n>1$, with $\out(G)\cong Q$. The following groups give explicit examples.
\begin{enumerate}
\item If $G= \langle a, b; (a^2ba^{-2}b)^n\rangle$ then $\out(G)\cong C_2\times C_2$.
\item If $G= \langle a, b; (a^2ba^{-3}b)^n\rangle$ then $\out(G)\cong C_2$.
\item If $G= \langle a, b; (a^{-2}ba^{4}ba^{-3}ba^{5}b)^n\rangle$ then $\out(G)$ is trivial.
\end{enumerate}
\end{lemma}

\section{Assembling the proof of Theorem~\ref{thm:onerelmaintheorem}}
\label{sec:proofofmaintheorem}

We now assemble the proof of Theorem~\ref{thm:onerelmaintheorem}, as stated in the introduction. We have so far omitted the case of $G=\< a, b; R^n\>$ with $R$ primitive, which occurs precisely when $G$ has more than one end, and precisely when $G\cong\mathbb{Z}\ast C_n$.
Proposition~\ref{primitivethm} now deals with this case.
\begin{proposition}\label{primitivethm}
Let $G= \< a, b; R^n\>$ with $n>1$. Suppose that $R$ is a primitive element of $F(a, b)$. Then the following isomorphism holds, where $\aut(C_n)$ commutes with the flip generator of $D_n$ and acts on the rotation generator in the natural way as automorphisms of $C_n$.
\[\out(G) \cong D_{n} \rtimes \aut(C_n)\] 
\end{proposition}
We do not prove Proposition~\ref{primitivethm}, as here $G\cong \mathbb{Z}\ast C_n$. There are a number of ways to approach the outer automorphism group of such a free product, and indeed presentations for the automorphism groups of free products are known \cite{fouxe1940uber} \cite{gilbert1987presentations} (of course the presentation of an automorphism group does not necessarilly yield the presentation of the corresponding outer automorphism group). A sketch proof of Proposition~\ref{primitivethm} can be found in the author's PhD thesis \cite[Section 3.5]{logan2014outer}.

\begin{proof}[Proof of Theorem~\ref{thm:onerelmaintheorem}.]
Let $G=\<a, b; R^n\>$ with $n>1$ and $R$ not a proper power.

Suppose that $G\cong\<a, b; [a, b]^n\>$. Then $\out(G)\cong \GL_2(\mathbb{Z})$ by Corollary~\ref{gl2z}.

Suppose that $G$ is one-ended, not isomorphic to $\<a, b; [a, b]^n\>$, and $\out(G)$ is infinite. Then $\out(G)$ is one of $D_{\infty}\times C_2$, $D_{\infty}$, $\mathbb{Z}\times C_2$ or $\mathbb{Z}$, by Lemma~\ref{thm:possifoutinfinite}. By Lemma~\ref{lem:realiseinfinite}, all these possibilities occur.

Suppose that $G$ is one-ended, not isomorphic to $\<a, b; [a, b]^n\>$ and $\out(G)$ is finite. Then $\out(G)$ is isomorphic to a finite subgroup of $\GL_2(\mathbb{Z})$, and the finite subgroups of $\GL_2(\mathbb{Z})$, up to isomorphism, are precisely the subgroups of $D_4$ and of $D_6$ \cite{zimmermann1996finite}, as required. By Lemma~\ref{lem:realisefinitebatch1} and Lemma~\ref{lem:examplesFiniteOut}, all these possibilities occur.

Suppose that $G$ is not one-ended. Then $R$ is a primitive element of $F(a, b)$ \cite{Pride1977} \cite{fischer1972one}, and then $\out(G)\cong D_n\rtimes\aut(C_n)$ by Proposition~\ref{primitivethm}.
\end{proof}

\section{Two applications}
\label{sec:applications}

We conclude this paper by proving Corollary~\ref{corol:MainCorollary} from the introduction and by describing how to give a presentation of
$\aut(G)$ for $G$ one-ended.

\p{Proof of Corollary~\ref{corol:MainCorollary}}
Recall that Corollary~\ref{corol:MainCorollary} states the existence of an algorithm which takes as input a presentation $\langle a, b; R^n\rangle$, $n>1$, defining a group $G$ and gives as output the isomorphism class of $\out(G)$.
\begin{proof}[Proof of Corollary~\ref{corol:MainCorollary}]
To prove Corollary~\ref{corol:MainCorollary} we give the relevant algorithm. Our algorithm applies the facts that there exists an algorithm to rewrite a word $U$ in $F(a, b)$ as $V^n$ where $V$ is not a proper power in $F(a, b)$, that it is decidable if a hyperbolic group splits over a virtually-cyclic group with infinite center \cite[Proposition~6.1]{dahmani2011isomorphism}, and that there exists an algorithm to determine the generators of the outer automorphism group of a hyperbolic group \cite[Theorem~3]{dahmani2011isomorphism}. Note that if it is known that $G$ splits over a virtually-$\mathbb{Z}$ group with infinite center then $G\cong \langle a, b; S^n(a^{-1}ba, b)\rangle$ for some word $S$, by Lemma~\ref{lem:OutInfSplit}, and the word $S$ can be found by enumerating the Nielsen transformations of $F(a, b)$ and applying them to $R$ \cite{Pride1977}. Given $G=\<a, b; R^n\>$, $n>1$, our algorithm is as follows.
\begin{enumerate}
\item Rewrite $R^n$ such that $R$ is freely and cyclically reduced and not a proper power in $F(a, b)$, and such that $\sigma_a(R)=0$.
\begin{enumerate}
\item If $R=b^{\epsilon}$ then $\out(G)\cong D_{2n}\rtimes \aut(C_n)$.
\item If $R$ is empty or a cyclic shift of $[a, b]^{\pm 1}$ then $\out(G)\cong \GL_2(\mathbb{Z})$.
\end{enumerate}
\item Does the JSJ-decomposition of $G$ split?
\begin{enumerate}
\item\label{list:outinfinite} If \emph{yes} then rewrite $R$ as $S(a^{-1}ba, b)$ and apply Section \ref{sec:outinfinite} to find $\out(G)$.
\item If \emph{no} then obtain the generators for $\out(G)$ and apply Section \ref{sec:outfinite} to find $\out(G)$.%
\end{enumerate}
\end{enumerate}
\end{proof}

\p{Presentations for \boldmath{$\aut(G)$}}
We now describe, with examples, how to write down a presentation for $\operatorname{Aut}(G)$ using $\operatorname{Out}(G)$, where $G=\<a, b; R^n\>$, $n>1$, is one-ended (the infinitely-ended case has appeared in print before \cite{fouxe1940uber} \cite{gilbert1987presentations}).

To begin, note that $\inn(G)\cong G$ in the canonical way (as $G$ has trivial center \cite{baumslagtaylor}), so we have the following relation.
\begin{equation}
\gamma_{_{R^n}}=1\label{type1}
\end{equation}
Next, take a transversal $T$ for $\out(G)$ which consists of Nielsen transformations and denote by $\mathcal{O}$ a subset of this transversal which will generate $\out(G)$. This transversal $T$ exists by Theorem~\ref{injecting}. Now, we have that $\gamma_{w}^{\psi}=\gamma_{\psi(w)}$ for all $\psi\in\aut(G)$. Therefore, we have the following relations for all $\psi\in\mathcal{O}$.
\begin{equation}
\gamma_{a}^{\psi}=\gamma_{\psi(a)}, \:\:
\gamma_b^{\psi}=\gamma_{\psi(b)}
\label{type2}
\end{equation}
We now have to ascertain how the elements of $\mathcal{O}$ multiply together. So, let $\langle X; \mathbf{r}\rangle$ be a presentation for $\out(G)$ which corresponds to the generators $\mathcal{O}$, then if $S\in\mathbf{r}$ is a relator we have that
\begin{equation}
S=\gamma_w
\label{type3}
\end{equation}
is a relation in $\aut(G)$ where $\gamma_w$ is the appropriate inner automorphism.
It is clear that these three kinds of relations (\ref{type1})--(\ref{type3}) are all the relations.

We now give some examples. We write $w$ for $\gamma_w$ (so $w$ represents the automorphism corresponding to conjugation by $w$).
\begin{example}
If $\operatorname{Out}(G) = \langle \widehat{\alpha}_i\rangle$ then $\aut(G)$ is the following group. \[\operatorname{Aut}(G) = \langle \alpha_i, a, b; R^n(a, b), \alpha_i^2=b^i, a^{\alpha_i} = a^{-1}b^i, b^{\alpha_i}=b\rangle\]
\end{example}
\begin{example}
If $\operatorname{Out}(G) = \langle \widehat{\beta}_i\rangle$ then $\aut(G)$ is the following group. \begin{align*}\operatorname{Aut}(G) &= \langle\beta_i, a, b; R^n(a, b), \beta_i^2=1, a^{\beta_i}=ab^i, b^{\beta_i} = b^{-1} \rangle\\&\cong G\rtimes C_2\end{align*}
\end{example}
\begin{example}
If $G\cong \langle a, b; [a, b]^n\rangle$ and writing $\aut(F(a, b))=\langle a, b, X; \mathbf{r}\rangle$ then we have the following group. \[\aut(G)=\langle a, b, X; \mathbf{r}, [a, b]^n\rangle\]
\end{example}

\bibliographystyle{amsalpha}
\bibliography{BibTexBibliography}
\end{document}